\def\B{\mathcal{B}}
\def\C{\mathcal{C}}
\def\I{\mathcal{I}}
\def\IF{\ensuremath{\mathbb F}}
\newcommand{\black}{\color{black}}
\newcounter{marg}[section]
\newcommand\rsout{\bgroup\markoverwith{\textcolor{red}{\rule[0.5ex]{2pt}{0.4pt}}}\ULon}
\theoremstyle{plain}%
  \newtheorem{theorem}{Theorem}[section]
  \newtheorem{corollary}{Corollary}[section]
  \newtheorem{proposition}{Proposition}[section]
  \newtheorem{lemma}{Lemma}[section]
  \newtheorem{example}{Example}[section]
  \newtheorem{definition}{Definition}[section]
\newtheorem{remark}{Remark}[section]
\newfont{\hueca}{msbm10}
\def\hu #1{\hbox{\hueca #1}}\def\hu #1{\hbox{\hueca #1}}
\begin{document}

\title{Leibniz algebras and graphs}

\thanks{The first author was  supported by the Centre for Mathematics of the University of Coimbra - UIDB/00324/2020, funded by the Portuguese Government through FCT/MCTES. Second author and fourth are supported by the PCI of the UCA `Teor\'\i a de Lie y Teor\'\i a de Espacios de Banach' and by the PAI with project number FQM298. Additionally, second author also is supported by the project FEDER-UCA18-107643. The third author was partially supported by CMUP, which is financed by national funds through FCT---Funda\c c\~ao para a Ci\^encia e a Tecnologia, I.P., under the project with reference UIDB/00144/2020.}

\author[E. Barreiro]{Elisabete Barreiro}
\address{Elisabete~Barreiro. \newline \indent University of Coimbra, CMUC, Department of Mathematics, Apartado 3008,
EC Santa Cruz,
3001-501 Coimbra
(Portugal). \hspace{0.1cm} {\em E-mail address}: {\tt mefb@mat.uc.pt}}{}

\author[A.J. Calder\'on]{Antonio J. Calder\'on}
\address{Antonio J. Calder\'on. \newline \indent Departamento de Matem\'aticas, Universidad de C\'adiz, Puerto Real (Espa\~na).}
\email{{\tt ajesus.calderon@uca.es}}

\author[S. A. Lopes]{Samuel A. Lopes}
\address{Samuel A. Lopes. \newline \indent CMUP, Departamento de Matem\'atica, Faculdade de Ci\^encias, Universidade do Porto, Rua do Campo Alegre s/n, 4169--007 Porto (Portugal).
\hspace{0.1cm} {\em E-mail address}: {\tt slopes@fc.up.pt}}{}

\author[J.M. S\'{a}nchez]{Jos\'{e} M. S\'{a}nchez}
\address{Jos\'{e} M. S\'{a}nchez. \newline \indent Departamento de Matem\'aticas, Universidad de C\'adiz, Puerto Real (Espa\~na).}
\email{{\tt txema.sanchez@uca.es}}


\thispagestyle{empty}

\begin{abstract}
We consider a Leibniz algebra ${\mathfrak L} = {\mathfrak I} \oplus {\mathfrak V}$ over an arbitrary base field $\mathbb{F}$, being ${\mathfrak I}$ the ideal generated by the products $[x,x], x \in {\mathfrak L}$. This ideal has a fundamental role in the study presented in our paper. A basis $\B=\{v_i\}_{i \in I}$ of ${\mathfrak L}$ is called multiplicative if for any $i,j \in I$ we have that $[v_i,v_j] \in {\mathbb F}v_k$ for some $k \in I$. We associate an adequate graph $\Gamma({\mathfrak L},\B)$ to ${\mathfrak L}$ relative to $\B$.  By arguing on this graph we show that ${\mathfrak L}$ decomposes as a direct sum of ideals, each one being associated to one connected component of $\Gamma({\mathfrak L},\B)$. Also the minimality of ${\mathfrak L}$ and the division property of ${\mathfrak L}$ are characterized in terms of the weak symmetry of the defined subgraphs $\Gamma({\mathfrak L},\B_{\mathfrak I})$ and $\Gamma({\mathfrak L},\B_{\mathfrak V})$.

\bigskip

{\it 2020MSC}: 05C50, 05C25, 17A32, 17A60, 17B65.

{\it Keywords}: Leibniz algebra, multiplicative basis, graph, structure theory.  \black
\end{abstract}

\maketitle

\section{Introduction}\label{sec1}

Leibniz algebras were presented by Bloh \cite{Leibniz3}, who called them the $D$-algebras. Two decades later, Loday introduced them in \cite{Loday} as a non-antisymmetric analogue of Lie algebras and naming them Leibniz algebras because it was Gottfried W. Leibniz who discovered the {\em Leibniz rule} for differentiation of functions. This formula is realized when the adjoint map is a derivation and generalizes the well-known Jacobi identity in such a way that any Lie algebra is a Leibniz algebra.

Recently, the structure of Leibniz algebras has been considered in the frameworks of low dimensional algebras, nilpotency and related problems \cite{Alb, Ayu99, Ayu01, Cabezas, Casas, Jiang, Liu}. We finally have to mention the recent book \cite{new_book}, where a complete state of the art on Leibniz algebras theory can be found. The inner structure of Leibniz algebras admitting a multiplicative basis $\B$ has been recently studied in \cite{Yo_Leibniz_mult_basis}, where the focus is on the  characterization of the $\B$-semisimplicity and of the $\B$-simplicity  of the algebra.

An interesting problem in graph theory and in abstract algebra consists in characterizing the structure of an algebraic object by the properties satisfied for some graph associated with it (see for instance \cite{big1,big2,graph3,graph4}). The paper \cite{Yografos} is devoted to the study of the structure of linear spaces under a bilinear map, by associating to this data an adequate graph.

The main goal of the present paper is to use properties of graphs to study Leibniz algebras ${\mathfrak L}$ with multiplicative bases $\B$, in order to obtain results about their algebraic structure.
Given a Leibniz algebras ${\mathfrak L}$, the ideal ${\mathfrak I}$ generated by the products $[x,x]$ with $x \in {\mathfrak L}$ plays a relevant role in our study, so we have to adapt the ideas of \cite{Yografos} to this situation.
By arguing on the associated graph, we show that a Leibniz algebra decomposes as a direct sum of ideals, each one being associated to one connected component of the mentioned graph.
In the last section, we approach the minimality property for this class of algebras by using the subgraphs $\Gamma({\mathfrak L},\B_{\mathfrak I})$ and $\Gamma({\mathfrak L},\B_{\mathfrak V})$, related to a given (vector space) decomposition ${\mathfrak L}={\mathfrak I}\oplus {\mathfrak V}$.


The paper is organized as follows. In Section 2 we introduce the (directed) graph associated with a Leibniz algebra ${\mathfrak L}$ admitting a multiplicative basis $\B$, denoted as $\Gamma({\mathfrak L},\B)$. By using this graph we prove that ${\mathfrak L}$ decomposes as a direct sum $${\mathfrak L} = \bigoplus_k \I_k$$ of ideals with a multiplicative basis contained in $\B$, each one being associated with a connected component of $\Gamma({\mathfrak L},\B)$. In the next section we discuss the relations among various decompositions of ${\mathfrak L}$ given by different choices of bases. Finally, in Section 4 we relate the weak symmetry of two concrete subgraphs with some properties of ${\mathfrak L}$.  
The minimality of ${\mathfrak L}$ and the division property of $\B$ are characterized. It is shown that ${\mathfrak L}$ is minimal if and only if $\B$ is of division if and only if the two graphs $\Gamma({\mathfrak L},\B_{\mathfrak I})$ and $\Gamma({\mathfrak L},\B_{\mathfrak V})$ are weakly symmetric. 

All of the Leibniz algebras considered are of arbitrary (finite) dimension over an arbitrary base field $\mathbb{F}$.

\begin{definition}\rm
A {\em Leibniz algebra}  ${\mathfrak L}$ is a vector space over a
field $\hu{F}$ endowed with a bilinear product $[\cdot, \cdot]$ satisfying the {\it Leibniz identity}
$$ [[y,z],x]=[[y,x],z]+[y,[z,x]],$$ for any $x, y, z \in {\mathfrak L}$.
\end{definition}

\noindent Clearly, Lie algebras are examples of Leibniz algebras. For any $x \in {\mathfrak L}$, consider the   adjoint map  ${\rm ad}_x : {\mathfrak L} \to {\mathfrak L}$ defined by ${\rm ad}_x(y) = [y,x]$, with $y \in {\mathfrak L}$. Observe that the Leibniz identity is equivalent to the assertion that ${\rm ad}_x$ is a derivation of ${\mathfrak L}$, for all $x \in {\mathfrak L}$.
A {\it subalgebra} $S$ of a Leibniz algebra ${\mathfrak L}$ is a vector subspace of ${\mathfrak L}$ such that $[S,S]\subset S$, and an {\it ideal} $I$ of ${\mathfrak L}$ is a subalgebra such that $[I,{\mathfrak L}]+[{\mathfrak L},I]\subset I$. 

The ideal ${\mathfrak I}$ generated by $\{[x,x]: x\in {\mathfrak L}\}$ plays an important role in the theory since it determines the (possible)  non-Lie character of the Leibniz algebra ${\mathfrak L}$. From the Leibniz identity, this ideal satisfies
\begin{equation}\label{equi}
[{\mathfrak L},{\mathfrak I}]=0.
\end{equation}

\noindent Observe that we can write $${\mathfrak L}= {\mathfrak I} \oplus {\mathfrak V},$$ where ${\mathfrak V}$ is a linear complement of ${\mathfrak I}$ in ${\mathfrak L}$ (as a vector space, ${\mathfrak V}$ is isomorphic to ${\mathfrak L}/{\mathfrak I}$, the so-called corresponding Lie algebra of  ${\mathfrak L}$). Hence, by taking bases ${\mathcal B}_{\mathfrak I}$ and $ {\mathcal B}_{\mathfrak V}$ of ${\mathfrak I}$ and $ {\mathfrak V}$, respectively, we get $${\mathcal B}={\mathcal B}_{\mathfrak I} \; \dot{\cup} \; {\mathcal B}_{\mathfrak V},$$ a basis of ${\mathfrak L}$.

\begin{definition}\label{def_orthogonal}\rm
A decomposition of a Leibniz algebra ${\mathfrak L}$ as the direct sum of linear subspaces $${\mathfrak L} = \bigoplus_{j\in J} {\mathfrak L}_j$$ is called \emph{orthogonal} if $[{\mathfrak L}_j,{\mathfrak L}_k]=\{0\}$ for all $j \neq k \in J$.
\end{definition}

For an arbitrary algebra $A$ over a base field $\mathbb{F}$,  a basis ${\mathcal B}=\{v_i\}_{i \in I}$ of $A$ is called {\em multiplicative} if for any $i,j \in I$ we have that  $v_iv_j \in {\mathbb F}v_k$, for some $k \in I$ (see \cite{mbasis}). In the particular case of Leibniz algebras, as in \cite{Yo_Leibniz_mult_basis}, the relation \eqref{equi} implies the following characterization.

\begin{definition}\label{def_mult_Leib}\rm
A basis ${\mathcal B}={\mathcal B}_{\mathfrak I} \; \dot{\cup} \; {\mathcal B}_{\mathfrak V}$ of a Leibniz algebra ${\mathfrak L},$ where we denote $${\mathcal B}_{\mathfrak I}:=\{e_k\}_{k \in K}, \hspace{0.4cm} {\mathcal B}_{\mathfrak V} := \{u_j\}_{j \in J},$$ is called {\em multiplicative} if:
\begin{enumerate}
\item[i.] For any $k \in K$ and $j \in J$ we have $[e_k, u_j] \in \mathbb{F}e_i$ for some $i \in K$.
\item[ii.] For any $j,k \in J$ we have either $[u_j, u_k] \in \mathbb{F}u_l$ for some $l \in J$, or $[u_j, u_k] \in \mathbb{F}e_i$ for some $i \in K$.
\end{enumerate}
\end{definition}

\begin{example}  \label{example}\rm (see \cite{Omirov})
Consider a (non-Lie) Leibniz algebra ${\mathfrak L}$ over a field $\mathbb{F}$ with characteristic different of $2$ and with the multiplicative  basis $\B=\B_{\mathfrak I}\;\dot{\cup}\;B_{\mathfrak V},$ where $\B_{\mathfrak I}=\{p,q\}$ and $\B_{\mathfrak V}=\{e,h,f\}$, defined by the following multiplication:
$$[e,h]= -[h,e] = 2e, \hspace{0.2cm}[h,f] = -[f,h] = 2f, \hspace{0.2cm} [e,f] = -[f,e] = h,$$
$$[p,h] = -[q,e] = p, \hspace{0.2cm}[p,f] = -[q,h] = q, $$
where the omitted products among basis elements are zero.
\end{example}

\begin{example}\label{r3}\rm 
The infinite-dimensional Leibniz algebra ${\mathfrak L}={\mathfrak I} \oplus {\mathfrak V}$ given in \cite[Example 1.2]{Yo_Leibniz_mult_basis}, over a base field $\IF$ with characteristic different to $2$, admits a multiplicative basis ${\mathcal B}={\mathcal B}_{\mathfrak I} \;\dot{\cup} \; {\mathcal B}_{\mathfrak V}$, where $${\mathcal B}_{\mathfrak I}= \{e_n: n \in {\mathbb N} \}$$ is a basis of ${\mathfrak I}$ and $${\mathcal B}_{\mathfrak V}=\{u_a,u_b,u_c, u_d\}$$ is a basis of ${\mathfrak V}$. The non-zero products from the basis ${\mathcal B} $ are:
\begin{center}
\begin{tabular}{l}
$[u_b,u_c]=u_a, \hspace{0.3cm} [u_c,u_b]=-u_a,$ \\
$[u_d,u_d]=e_0, \hspace{0.3cm} [e_0,u_d]=e_1,$\\
$[e_n,u_a]=e_n, \mbox{ for } n\geq 2,$\\
$[e_n,u_b]=e_{n+1}, \mbox{ for } n\geq 2,$\\
$[e_n,u_c]=(n-2)e_{n-1}, \mbox{ for } n\geq 3.$
\end{tabular}
\end{center}
\end{example}

\begin{example}\label{exemplo_filiform}
Let ${\mathfrak L}$ be the model filiform Leibniz algebra with multiplicative basis $\mathcal{B} =  \mathcal{B}_{\mathfrak I} \;\dot{\cup} \;    \mathcal{B}_{\mathfrak V}$, where $  \mathcal{B}_{\mathfrak I}  =  \{e_2,\dots,e_n\}$  and $  \mathcal{B}_{\mathfrak V} =  \{e_1 \}$, with non-zero products given by $$[e_i, e_1] = e_{i+1}, \hspace{0.2cm} 2 \leq i \leq n - 1.$$ 
\end{example}

\begin{example}\label{Example_Omirov}
Let $n \geq 4$ be an odd integer and ${\mathfrak L}$ be the  
$n$-dimensional  Leibniz algebra, given in Theorem 3.4 of \cite{ejemploOmirov} over the complex numbers, with  basis $$\mathcal{B} := \{e,f,h,x_0,x_1,\dots, x_{n-4}\},$$ such that the non-zero products from $\mathcal{B}$ are
\begin{align*}
& [e,f] = -[f,e] = h, && [x_k, e] = k(k+3-n)x_{k-1}, \hspace{0.2cm} 1 \leq k \leq n-4, \\
& [e,h] = -[h,e] = 2e, && [x_k, h] = (n-4-2k)x_k, \hspace{0.2cm} 0 \leq k \leq n-4,\\
& [h,f] = -[f,h] = 2 f, && [x_k, f] = x_{k+1}, \hspace{0.2cm}  0 \leq k \leq n-5.
\end{align*}
Note that ${\mathfrak L}/{\mathfrak I}$ is isomorphic to the simple Lie algebra $sl_2$.
Clearly $\mathcal{B}$ is a multiplicative basis. This example generalizes Example~\ref{example}, with $n=5$ and $p=x_0$, $q=x_1$.
\end{example}


\section{Leibniz algebras admitting a multiplicative basis and their graphs}

In this section we relate the concept of Leibniz algebra admitting a multiplicative basis with graphs in order to characterize their structure. We recall that a \emph{(directed) graph} is a pair $(V,E)$ where $V$ is a set of vertices and $E\subset V\times V$ a set of (directed) edges connecting the vertices.
The reader can find the basic necessary graph concepts in \cite{Yografos}. All graphs considered here will be directed graphs.

\begin{definition}\label{def_graph_associated}\rm
Let ${\mathfrak L}$ be a Leibniz algebra admitting a multiplicative basis $\B$. The  directed graph associated to ${\mathfrak L}$ relative to $\B$ is $\Gamma({\mathfrak L},\B) := (V,E)$, where $V:=\B$ and
$$E := \{(v_j,v_k) \in V \times V : \{[v_i,v_j],[v_j,v_i]\} \cap \mathbb{F}^{\times}v_k \neq \emptyset, \hspace{0.1cm} \mbox{\rm for some } v_i \in \B\}.$$
\end{definition}

\begin{remark}\label{Remark21} \rm
Taking into account Definition \ref{def_mult_Leib}, we observe that $\Gamma({\mathfrak L},\B)$ can be described more precisely by $V=\B_{\mathfrak I}\; \dot{\cup} \; \B_{\mathfrak V}$ and
\begin{align*}
E &= \{(u_j,e_k) \in V \times V : [e_i,u_j]\in \mathbb{F}^{\times}e_k \neq \emptyset, \hspace{0.1cm} \mbox{\rm for some } e_i \in \B_{\mathfrak I}\} \\
& \; \cup \{(e_i,e_k) \in V \times V : [e_i,u_j]\in \mathbb{F}^{\times}e_k \neq \emptyset, \hspace{0.1cm} \mbox{\rm for some } u_j \in \B_{\mathfrak V}\} \\
& \; \cup \{(u_j,e_i) \in V \times V : \{[u_j,u_k],[u_k,u_j]\} \cap \mathbb{F}^{\times}e_i \neq \emptyset, \hspace{0.1cm} \mbox{\rm for some } u_k \in \B_{\mathfrak V}\} \\
& \; \cup \{(u_j,u_l) \in V \times V : \{[u_j,u_k],[u_k,u_j]\} \cap \mathbb{F}^{\times}u_l \neq \emptyset, \hspace{0.1cm} \mbox{\rm for some } u_k \in \B_{\mathfrak V}\}.
\end{align*}
\end{remark}




\begin{example}\label{Ejemplo_grafo}\rm
Consider the Leibniz algebra ${\mathfrak L}$ of the Example \ref{example} over a field with characteristic different of $2$ and with the basis $\B=\B_{\mathfrak I}\;\dot{\cup}\;B_{\mathfrak V},$ where $\B_{\mathfrak I}=\{p,q\}$, $\B_{\mathfrak V}=\{e,h,f\}$.  
We have that $V=\{e,h,f,p,q\}$,
\begin{align*}
E = &\{(e,p),(h,p),(h,q),(f,q)\} \; \cup \; \{(p,p),(p,q),(q,p),(q,q)\} \\
& \; \cup \; \emptyset \; \cup \; \{(e,e),(e,h),(h,f),(f,f),(h,e),(f,h)\}
\end{align*}
and the associated graph $\Gamma({\mathfrak L},\B)$  is the following:
\begin{center}
\begin{tikzpicture}[scale=0.8, transparency group=knockout]
\begin{scope}[every node/.style={circle,thick,draw}]
\node[shape=circle,fill={rgb,255:gray,300; white,50},draw=black] (E) at (-1,0) {$e$};
\node[shape=circle,fill={rgb,255:gray,300; white,50},draw=black] (P) at (2,-1) {$p$};
\node[shape=circle,fill={rgb,255:gray,300; white,50},draw=black] (H) at (-1,-2) {$h$};
\node[shape=circle,fill={rgb,255:gray,300; white,50},draw=black] (F) at (-1,-4) {$f$};
\node[shape=circle,fill={rgb,255:gray,300; white,50},draw=black] (Q) at (2,-3) {$q$};

\draw[<->] (E) edge (H);
\draw[->] (E) edge (P);
\draw[->] (H) edge (P);
\draw[->] (H) edge (Q);
\draw[->] (F) edge (Q);
\draw[<->] (H) edge (F);
\draw[<->] (P) edge (Q);

\path (E) edge [loop left] (E);
\path (P) edge [loop right] (P);
\path (Q) edge [loop right] (Q);
\path (F) edge [loop left] (F);
\end{scope}
\end{tikzpicture}
\end{center}
\end{example}

\begin{example}\label{r3_grafo}\rm 
The associated graph of the Leibniz algebra presented in Example \ref{r3} over a field with characteristic $0$ and with the  multiplicative basis ${\mathcal B}={\mathcal B}_{\mathfrak I} \;\dot{\cup} \; {\mathcal B}_{\mathfrak V}$, where ${\mathcal B}_{\mathfrak I}= \{e_n: n \in {\mathbb N} \}$ and ${\mathcal B}_{\mathfrak V}=\{u_a,u_b,u_c, u_d\}$ is as follows:
\begin{center}
\begin{tikzpicture}[scale=0.8, transparency group=knockout]
\begin{scope}[every node/.style={circle,thick,draw}]
\node[shape=circle,fill={rgb,255:gray,300; white,50},draw=black] (D) at (-1,2) {$u_d$};
\node[shape=circle,fill={rgb,255:gray,300; white,50},draw=black] (A) at (4,2) {$u_a$};
\node[shape=circle,fill={rgb,255:gray,300; white,50},draw=black] (B) at (6,2) {$u_b$};
\node[shape=circle,fill={rgb,255:gray,300; white,50},draw=black] (C) at (8,2) {$u_c$};
\node[shape=circle,fill={rgb,255:gray,300; white,50},draw=black] (E0) at (-2,-2) {$e_0$};
\node[shape=circle,fill={rgb,255:gray,300; white,50},draw=black] (E1) at (0,-2) {$e_1$};
\node[shape=circle,fill={rgb,255:gray,300; white,50},draw=black] (E2) at (2,-2) {$e_2$};
\node[shape=circle,fill={rgb,255:gray,300; white,50},draw=black] (E3) at (4,-2) {$e_3$} ;
\node[shape=circle,fill={rgb,255:gray,300; white,50},draw=black] (E4) at (6,-2) {$e_4$};
\node[shape=circle,fill={rgb,255:gray,300; white,50},draw=black] (E5) at (8,-2) {$e_5$};
\node[shape=circle,fill={rgb,255:gray,300; white,50},draw=black] (E) at (10,-2) {$e_n$};

\draw[->] (B) edge (A);
\draw[->] (C) edge [in=45, out=135] (A);

\draw[->] (E0) edge (E1);
\draw[<->] (E2) edge (E3);
\draw[<->] (E3) edge (E4);
\draw[<->] (E4) edge (E5);
\draw[<->] (E4) edge (E5);

\node[fill,opacity=0,text opacity=1] at ($(E5)!.5!(E)$) {\ldots};
\node[fill,opacity=0,text opacity=1] at ($(E)!.5!(12,-2)$) {\ldots};

\draw[->] (D) edge (E0);
\draw[->] (D) edge (E1);
\draw[->] (A) edge (E2);
\draw[->] (A) edge (E3);
\draw[->] (A) edge (E4);
\draw[->] (A) edge (E5);
\draw[->] (A) edge (E);
\draw[->] (B) edge (E3);
\draw[->] (B) edge (E4);
\draw[->] (B) edge (E5);
\draw[->] (B) edge (E);
\draw[->] (C) edge (E2);
\draw[->] (C) edge (E3);
\draw[->] (C) edge (E4);
\draw[->] (C) edge (E5);
\draw[->] (C) edge (E);

\path (E2) edge [loop below] (E2);
\path (E3) edge [loop below] (E3);
\path (E4) edge [loop below] (E4);
\path (E5) edge [loop below] (E5);
\path (E) edge [loop below] (E);
\end{scope}
\end{tikzpicture}
\end{center}
\end{example}

\begin{example}
For the model filiform Leibniz algebra ${\mathfrak L}$ with basis $\mathcal{B} := \{e_1,\dots,e_n\}$ from Example \ref{exemplo_filiform} the associated graph $\Gamma({\mathfrak L},\mathcal{B})$ is
\begin{center}
\begin{tikzpicture}[scale=0.8, transparency group=knockout]
\begin{scope}[every node/.style={circle,thick,draw}]
\node[shape=circle,fill={rgb,255:gray,300; white,50},draw=black] (E1) at (4,2) {$e_1$};
\node[shape=circle,fill={rgb,255:gray,300; white,50},draw=black] (E2) at (4,0) {$e_2$} ;
\node[shape=circle,fill={rgb,255:gray,300; white,50},draw=black] (E3) at (6,0) {$e_3$};
\node[shape=circle,fill={rgb,255:gray,300; white,50},draw=black] (E4) at (8,0) {$e_4$};
\node[shape=circle,fill={rgb,255:gray,300; white,50},draw=black] (En) at (11,0) {$e_n$};

\draw[->] (E1) edge (E3);
\draw[->] (E1) edge (E4);
\draw[->] (E1) edge (En);
\draw[->] (E2) edge (E3);
\draw[->] (E3) edge (E4);
\draw[->] (E4) edge (9,0);
\draw[<-] (En) edge (9.9,0);

\node[fill,opacity=0,text opacity=1] at ($(E4)!.5!(En)$) {\ldots};
\end{scope}
\end{tikzpicture}
\end{center}
\end{example}

\begin{example}\label{EJEMM_24}
Let ${\mathfrak L}$ be the $n$-dimensional  Leibniz algebra from Example \ref{Example_Omirov} over a field with characteristic $0$. The associated graph $\Gamma({\mathfrak L},\mathcal{B})$ is 
\begin{center}
\begin{tikzpicture}[scale=0.8, transparency group=knockout]
\begin{scope}[every node/.style={circle,thick,draw}]
\node[shape=circle,fill={rgb,255:gray,300; white,50},draw=black] (E) at (-1,2) {$e$};
\node[shape=circle,fill={rgb,255:gray,300; white,50},draw=black] (H) at (2,2) {$h$};
\node[shape=circle,fill={rgb,255:gray,300; white,50},draw=black] (F) at (5,2) {$f$};
\node[shape=circle,fill={rgb,255:gray,300; white,50},draw=black] (X0) at (-3,0) {$x_0$};
\node[shape=circle,fill={rgb,255:gray,300; white,50},draw=black] (X1) at (-1,0) {$x_1$};
\node[shape=circle,fill={rgb,255:gray,300; white,50},draw=black] (XN-5) at (5,0) {$x_{n-5}$};
\node[shape=circle,fill={rgb,255:gray,300; white,50},draw=black] (XN-4) at (7,0) {$x_{n-4}$};

\draw[->] (E) edge (X0);
\draw[->] (E) edge (X1);
\draw[->] (E) edge (XN-5);
\draw[<->] (E) edge (H);
\draw[<->] (H) edge (F);
\draw[->] (H) edge (X0);
\draw[->] (H) edge (X1);
\draw[->] (H) edge (XN-5);
\draw[->] (H) edge (XN-4);
\draw[->] (F) edge (X1);
\draw[->] (F) edge (XN-4);
\draw[<->] (X0) edge (X1);
\draw[<->] (X1) edge (0.8,0);
\draw[<->] (XN-5) edge (3,0);
\draw[<->] (XN-5) edge (XN-4);


\node[fill,opacity=0,text opacity=1] at (2,0) {\ldots};

\path (E) edge [loop left] (E);
\path (F) edge [loop right] (F);
\path (X0) edge [loop below] (X0);
\path (X1) edge [loop below] (X1);
\path (XN-5) edge [loop below] (XN-5);
\path (XN-4) edge [loop below] (XN-4);
\end{scope}
\end{tikzpicture}
\end{center}
\end{example}

Given two vertices $v_i,v_j\in V$, an \emph{undirected path} from $v_i$ to $v_j$ is a sequence of vertices $(v_{i_1},\dots,v_{i_n})$ with $v_{i_1}=v_i$,  $v_{i_n}=v_j$ and such that either $(v_{i_{r}},v_{i_{r+1}})\in E$ or $(v_{i_{r+1}},v_{i_{r}})\in E$, for $1\le r \le n-1$.

We can introduce an  equivalence relation in $V$ defined by $v_i\sim v_j$ if and only if either $v_i= v_j$ or there exists  an undirected  path from $v_i$ to $v_j$. Then it is  said that $v_i$ and $v_j$ are \emph{connected} and the equivalence class of $v_i$, denoted by $[v_i] \in V/\sim$, corresponds to a connected component $\C_{[v_i]}$ of the graph $\Gamma({\mathfrak L},\B)$. Then
\begin{equation}\label{graphdec}
\Gamma({\mathfrak L},\B)=\dot{\bigcup_{[v_i]\in V/\sim}} \C_{[v_i]}.
\end{equation}

\noindent We can also associate to any $\C_{[v_i]}$ the linear subspace
\begin{equation} \label{linsub}
{\mathfrak L}_{\C_{[v_i]}}:=\bigoplus_{v_j\in [v_i]}\IF v_j
\end{equation}

\noindent and assert the next result:

\begin{theorem}\label{decompositions}
Let ${\mathfrak L}$ be a Leibniz algebra admitting a multiplicative basis $\B=\{v_i\}_{i\in I}$. Then ${\mathfrak L}$ decomposes as  the orthogonal direct sum $${\mathfrak L} = \bigoplus\limits_{[v_i] \in V/\sim} {\mathfrak L}_{C_{[v_i]}},$$ where each ${\mathfrak L}_{C_{[v_i]}}$ is an ideal of ${\mathfrak L}$, admitting the set $[v_i] \subset \B$ as multiplicative basis.
\end{theorem}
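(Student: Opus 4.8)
The plan is to reduce the whole statement to a single connectivity principle and then read off all three claims (orthogonality, the ideal property, and the multiplicative basis) from it. Since the connected components partition the vertex set $V=\B$ and $\mathfrak{L}=\bigoplus_{v\in\B}\IF v$, the equality $\mathfrak{L}=\bigoplus_{[v_i]}\mathfrak{L}_{\C_{[v_i]}}$ as a direct sum of subspaces is immediate by regrouping the basis according to \eqref{graphdec} and \eqref{linsub}; all the work lies in the adjectives \emph{orthogonal} and \emph{ideal}.

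The crux is the following principle: \emph{if $v_a,v_b,v_c\in\B$ satisfy $[v_a,v_b]\in\IF^{\times}v_c$, then $v_a$, $v_b$ and $v_c$ lie in the same connected component of $\Gamma(\mathfrak{L},\B)$.} I would prove it by the case analysis dictated by \eqref{equi} and Definition~\ref{def_mult_Leib}. Because $[\mathfrak{L},\mathfrak{I}]=0$, a product of two basis elements can only be nonzero when it has the form $[e_i,u_j]$ or $[u_j,u_k]$. If $[e_i,u_j]\in\IF^{\times}e_k$, then the first two families of edges in Remark~\ref{Remark21} supply both $(u_j,e_k)\in E$ and $(e_i,e_k)\in E$, so $e_i$, $u_j$ and $e_k$ are all joined to the single vertex $e_k$. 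If $[u_j,u_k]\in\IF^{\times}e_i$ (resp. $[u_j,u_k]\in\IF^{\times}u_l$), then, because the defining condition is symmetric in the two $\mathfrak{V}$-factors, the third (resp. fourth) family of edges supplies both $(u_j,e_i)$ and $(u_k,e_i)$ (resp. both $(u_j,u_l)$ and $(u_k,u_l)$); again the three relevant vertices share a component. This symmetric recording of an edge for \emph{each} factor of a product is exactly what makes the principle work, and verifying that no case escapes it is the step I expect to require the most care.

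Granting the principle, the rest is formal. For orthogonality, take distinct components $\C\neq\C'$ and basis vectors $v_a\in\C$, $v_b\in\C'$; the multiplicative basis property gives $[v_a,v_b]\in\IF v_c$, and if this were nonzero the principle would force $v_a\sim v_b$, contradicting $\C\neq\C'$. Hence $[v_a,v_b]=0$, and symmetrically $[v_b,v_a]=0$, so by bilinearity $[\mathfrak{L}_{\C},\mathfrak{L}_{\C'}]=0$. Applying the same argument to $v_a,v_b$ inside a single component $\C$ shows that any nonzero $[v_a,v_b]\in\IF^{\times}v_c$ has $v_c\in\C$, whence $[\mathfrak{L}_{\C},\mathfrak{L}_{\C}]\subseteq\mathfrak{L}_{\C}$; together with orthogonality and the decomposition into components this yields $[\mathfrak{L}_{\C},\mathfrak{L}]+[\mathfrak{L},\mathfrak{L}_{\C}]\subseteq\mathfrak{L}_{\C}$, so each $\mathfrak{L}_{\C_{[v_i]}}$ is an ideal. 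The same inclusion $[v_a,v_b]\in\IF v_c$ with $v_c$ in the same component shows that the set $[v_i]\subseteq\B$ is a multiplicative basis of $\mathfrak{L}_{\C_{[v_i]}}$, which completes the argument.
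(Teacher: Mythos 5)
Your proposal is correct and follows essentially the same route as the paper: the decisive observation in both is that whenever $[v_a,v_b]\in\IF^{\times}v_c$ the definition of the edge set yields \emph{both} $(v_a,v_c)\in E$ and $(v_b,v_c)\in E$, so all three vertices share a connected component, from which orthogonality, the ideal property and the multiplicativity of $[v_i]$ all follow. Your case analysis via Remark~\ref{Remark21} is just a more explicit verification of what the paper reads off directly from Definition~\ref{def_graph_associated}.
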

\begin{proof}
From Equation \eqref{graphdec} and Equation \eqref{linsub} we can assert that ${\mathfrak L}$ is the direct sum of the family of linear subspaces ${\mathfrak L}_{C_{[v_i]}}$ with $[v_i] \in V/\sim$, each one admitting the set $[v_i] \subset \B$ as multiplicative basis.

Let us suppose that there exist $v_j \in {\mathfrak L}_{C_{[v_i]}}$ and $v_k, v_l \in \B$ such that $$\{[v_j,v_k],[v_k,v_j]\} \cap \mathbb{F}^{\times}v_l \neq \emptyset.$$  Then $(v_j,v_l)$ and $(v_k,v_l)$ are edges of $C_{[v_i]}$, and thus $v_k,v_l \in {\mathfrak L}_{C_{[v_i]}}$. From here we conclude that the direct sum is orthogonal and that ${\mathfrak L}_{C_{[v_i]}}$ is an ideal of ${\mathfrak L}$.
\end{proof}


\begin{example} \rm 
The Leibniz algebra of Example \ref{r3} decomposes as the orthogonal direct sum $${\mathfrak L} = {\mathfrak L}_{C_{[u_d]}} \oplus {\mathfrak L}_{C_{[u_a]}}$$ where $${\mathfrak L}_{C_{[u_d]}} := \mathbb{F} u_d \oplus \mathbb{F}e_0 \oplus \mathbb{F}e_1, \hspace{1cm} {\mathfrak L}_{C_{[u_a]}} := \mathbb{F}u_a \oplus \mathbb{F}u_b \oplus \mathbb{F}u_c \oplus \mathbb{F}e_2 \oplus \mathbb{F}e_3 \oplus \cdots \oplus \mathbb{F}e_n \oplus \cdots$$ are ideals of ${\mathfrak L}$ admitting the multiplicative bases in $\B$: $$[u_d]:=\{u_d,e_0,e_1\}, \hspace{1.5cm} [u_a]:=\{u_a,u_b,u_c\}\cup \{e_i\}_{i\geq 2},$$ respectively.
\end{example}

We recall a Leibniz algebra ${\mathfrak L}$ is {\em simple} if its product is non-zero and its only ideals are $\{0\}$, ${\mathfrak I}$ and ${\mathfrak L}$. It should be noted that this definition agrees with the definition of simple Lie algebra, since in this framework ${\mathfrak I}=\{0\}$.

\begin{corollary}\label{Corolario 2.1}
If ${\mathfrak L}$ is simple, then any two vertices of $\Gamma({\mathfrak L},\B)$ are connected.
\end{corollary}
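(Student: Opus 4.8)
The plan is to prove the contrapositive: I would show that if $\Gamma({\mathfrak L},\B)$ has at least two distinct connected components, then ${\mathfrak L}$ is not simple. So suppose the graph is disconnected, meaning the index set $V/\!\sim$ of Theorem~\ref{decompositions} has more than one element. Applying that theorem, I obtain a decomposition ${\mathfrak L}=\bigoplus_{[v_i]\in V/\sim}{\mathfrak L}_{C_{[v_i]}}$ as an orthogonal direct sum of ideals, where each summand is nonzero (it contains at least one basis vector). Fixing one connected component $[v_i]$, the summand ${\mathfrak L}_{C_{[v_i]}}$ is a proper nonzero ideal of ${\mathfrak L}$, since the complementary sum $\bigoplus_{[v_j]\neq[v_i]}{\mathfrak L}_{C_{[v_j]}}$ is also nonzero.

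The main obstacle is reconciling this with the precise definition of simplicity given in the excerpt, which is not the naive ``only ideals are $\{0\}$ and ${\mathfrak L}$'' but rather permits ${\mathfrak I}$ as an admissible ideal. Thus having a proper nonzero ideal is not immediately a contradiction: I must rule out the possibility that the only proper nonzero ideals arising from the decomposition coincide with ${\mathfrak I}$. Concretely, I would argue that if ${\mathfrak L}$ is simple and disconnected, then each ${\mathfrak L}_{C_{[v_i]}}$ must equal one of $\{0\}$, ${\mathfrak I}$, or ${\mathfrak L}$; since each summand is a nonzero proper ideal, each equals ${\mathfrak I}$. But the summands are linearly independent (their direct sum is ${\mathfrak L}$), so at most one of them can equal the single ideal ${\mathfrak I}$, forcing $V/\!\sim$ to have exactly one element, a contradiction.

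To make this rigorous I would need to examine how ${\mathfrak I}$ distributes across the components. Because $\B=\B_{\mathfrak I}\,\dot\cup\,\B_{\mathfrak V}$ and ${\mathfrak I}=\bigoplus_{e_k\in\B_{\mathfrak I}}\IF e_k$, each component $[v_i]$ contributes ${\mathfrak L}_{C_{[v_i]}}\cap{\mathfrak I}=\bigoplus_{e_k\in[v_i]\cap\B_{\mathfrak I}}\IF e_k$, and these intersections sum to ${\mathfrak I}$. The key point is that a summand ${\mathfrak L}_{C_{[v_i]}}$ can equal ${\mathfrak I}$ only if $[v_i]\subseteq\B_{\mathfrak I}$ and moreover $[v_i]$ exhausts all of $\B_{\mathfrak I}$; in that case every other component $[v_j]$ satisfies $[v_j]\subseteq\B_{\mathfrak V}$, so ${\mathfrak L}_{C_{[v_j]}}\subseteq{\mathfrak V}$ and cannot equal $\{0\}$, ${\mathfrak I}$, or ${\mathfrak L}$ unless it is trivial—contradicting nontriviality. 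This pins down the structure enough to derive the contradiction.

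In summary, the argument runs: assume ${\mathfrak L}$ simple; invoke Theorem~\ref{decompositions}; observe each component yields a nonzero proper ideal; use the definition of simplicity to force every such ideal to equal ${\mathfrak I}$; and then use the directness of the sum together with the placement of $\B_{\mathfrak I}$ among the components to conclude that only one component can exist. The delicate part throughout is handling the nonstandard role of ${\mathfrak I}$ in the definition of simple, rather than the graph-theoretic mechanics, which are supplied wholesale by the preceding theorem.
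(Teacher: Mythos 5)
Your proof is correct and follows essentially the route the paper intends: the corollary is stated without proof as an immediate consequence of Theorem~\ref{decompositions}, and your contrapositive argument---the orthogonal decomposition yields at least two nonzero proper ideals, simplicity forces each to equal ${\mathfrak I}$, and directness of the sum rules out more than one such summand---is exactly that reasoning. The third paragraph tracking how $\B_{\mathfrak I}$ distributes over the components is harmless but unnecessary, since the directness argument of your second paragraph already closes the case.
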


\begin{example}
By Corollary \ref{Corolario 2.1}, the Leibniz algebra of Example \ref{r3} is not simple because its associated graph has two components, as shown in Example \ref{r3_grafo}.
\end{example}

\noindent To identify the components of the decomposition given in Theorem \ref{decompositions} we only need to focus on the connected components of the associated graph.

\section{Relating the graphs given by different choices of bases}

In general, two different multiplicative bases of ${\mathfrak L}$ determine different associated graphs, which can give rise to different decompositions of ${\mathfrak L}$ as an orthogonal direct sum of ideals (see Theorem \ref{decompositions}). For instance, consider the next example. First we recall that two graphs $(V,E)$ and $(V',E')$ are {\em isomorphic} if there exists a bijection $f : V \to V'$ such that $(v_i,v_j) \in E$ if and only if $(f(v_i),f(v_j)) \in E'$.
 
\begin{example}
Consider the complex Leibniz algebra ${\mathfrak L}$ of Example \ref{example} with the multiplicative basis $\B':=\{e+f,e-f,h,p+q,p-q\}$. Indeed, the non-zero products in the basis $\B' $ are
\begin{align*}
&[e-f,e+f] = -[e+f,e-f] = 2h,\\
&[e+f,h] = -[h,e+f] = 2(e-f),\\
&[e-f,h] = -[h,e-f] = 2(e+f),\\
&[p+q,h] = -[p+q,e+f] = [p-q,e-f] = p-q,\\
&[p-q,e+f] = [p-q,h] = -[p+q,e-f] = p+q.
\end{align*}
Then the associated graph $\Gamma({\mathfrak L},\B')$ is
\begin{center}
\begin{tikzpicture}[scale=0.8, transparency group=knockout]
\begin{scope}[every node/.style={circle,thick,draw}]
\node[shape=circle,fill={rgb,255:gray,300; white,50},draw=black] (E+F) at (-1,0) {$e+f$};
\node[shape=circle,fill={rgb,255:gray,300; white,50},draw=black] (E-F) at (-1,-4) {$e-f$};
\node[shape=circle,fill={rgb,255:gray,300; white,50},draw=black] (P+Q) at (2,0) {$p+q$};
\node[shape=circle,fill={rgb,255:gray,300; white,50},draw=black] (P-Q) at (2,-4) {$p-q$};
\node[shape=circle,fill={rgb,255:gray,300; white,50},draw=black] (H) at (-1,-2) {$h$};

\draw[<->] (E+F) edge [in=-560, out=560] (E-F);
\draw[<->] (P+Q) edge (P-Q);
\draw[<->] (E+F) edge (H);
\draw[<->] (E-F) edge (H);
\draw[->] (E+F) edge (P+Q);
\draw[->] (E+F) edge (P-Q);
\draw[->] (E-F) edge (P+Q);
\draw[->] (E-F) edge (P-Q);
\draw[->] (H) edge (P+Q);
\draw[->] (H) edge (P-Q);

\path (P+Q) edge [loop right] (P+Q);
\path (P-Q) edge [loop right] (P-Q);
\end{scope}
\end{tikzpicture}
\end{center}
which is  clearly non-isomorphic to the one obtained with the multiplicative basis $\B$ presented in Example \ref{Ejemplo_grafo}.
\end{example}

Next we give a condition under which the graphs associated to ${\mathfrak L}$ by two different multiplicative bases are isomorphic. As a consequence, we establish a sufficient condition under which two decompositions of ${\mathfrak L}$, induced by two different multiplicative bases, are equivalent.

\noindent We recall that an {\em automorphism} of a Leibniz algebra ${\mathfrak L}$ is a linear isomorphism $f : {\mathfrak L} \to {\mathfrak L}$ satisfying $[f(x),f(y)]=f([x,y])$, for all $x,y \in {\mathfrak L}$.

\begin{definition}\rm
Let ${\mathfrak L}$ be a Leibniz algebra. Two bases $\B=\{v_i\}_{i \in I}$ and $\B'=\{w_j\}_{j \in J}$ of ${\mathfrak L}$ are {\em equivalent} if there exists an automorphism $f : {\mathfrak L} \to {\mathfrak L}$ satisfying $f(\B)=\B'$.
\end{definition}

\begin{lemma}\label{isomorphism}
Let ${\mathfrak L}$ be a Leibniz algebra and consider two multiplicative bases $\B$ and $\B'$ of ${\mathfrak L}$. If $\B$ and $\B'$ are two equivalent bases, then the associated graphs $\Gamma({\mathfrak L},\B)$ and $\Gamma({\mathfrak L},\B')$ are isomorphic.
\end{lemma}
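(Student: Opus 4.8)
The plan is to take the automorphism $f$ witnessing the equivalence of $\B$ and $\B'$ and use its restriction as the desired graph isomorphism. Since $f$ is bijective and $f(\B)=\B'$, the map $g:=f|_{\B}\colon \B\to\B'$ is a bijection between the vertex sets $V=\B$ and $V'=\B'$ of $\Gamma({\mathfrak L},\B)$ and $\Gamma({\mathfrak L},\B')$. It then suffices to check that $g$ preserves edges in both directions, i.e.\ that $(v_j,v_k)\in E$ if and only if $(g(v_j),g(v_k))\in E'$.

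The core step is the observation that an automorphism preserves the relation ``$[a,b]$ lies in $\mathbb{F}^{\times}c$''. Concretely, suppose $(v_j,v_k)\in E$. By Definition \ref{def_graph_associated} there is some $v_i\in\B$ with, say, $[v_i,v_j]=\lambda v_k$ for a scalar $\lambda\in\mathbb{F}^{\times}$ (the alternative $[v_j,v_i]=\lambda v_k$ is handled identically). Applying $f$ and using both $f([v_i,v_j])=[f(v_i),f(v_j)]$ and the linearity of $f$, I obtain $[f(v_i),f(v_j)]=\lambda f(v_k)$. As $v_i,v_j,v_k\in\B$ we have $f(v_i),f(v_j),f(v_k)\in\B'$, and so $f(v_i)$ serves as a witness placing $(g(v_j),g(v_k))=(f(v_j),f(v_k))$ in $E'$.

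The reverse implication follows by symmetry: $f^{-1}$ is again an automorphism of ${\mathfrak L}$ and satisfies $f^{-1}(\B')=\B$, so running the same argument with $f^{-1}$ shows that $(g(v_j),g(v_k))\in E'$ forces $(v_j,v_k)\in E$. Together these two implications yield the required equivalence, so $g$ is a graph isomorphism.

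I do not expect a genuine obstacle here. The argument rests only on $f$ respecting the bracket and on $f$ sending each basis vector of $\B$ \emph{exactly} to a basis vector of $\B'$ (not merely to a scalar multiple of one), which is precisely what the hypothesis $f(\B)=\B'$ guarantees. The single point deserving care is that the nonzero scalar $\lambda$ must remain nonzero after applying $f$; this is immediate, since $f$ is linear and injective, whence $\lambda f(v_k)\neq 0$ whenever $\lambda\neq 0$.
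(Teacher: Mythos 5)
Your proposal is correct and follows essentially the same route as the paper's own proof: restrict the automorphism $f$ to the vertex set, transport the witnessing relation $[v_i,v_j]=\lambda v_k$ (or $[v_j,v_i]=\lambda v_k$) through $f$ to get an edge in $E'$, and use $f^{-1}$ for the converse. No gaps.
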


\begin{proof}
Let us suppose that $\B=\{v_i\}_{i\in I}$ and $\B'=\{w_j\}_{j\in J}$ are two equivalent multiplicative bases of ${\mathfrak L}$. Then, there exists an automorphism $f : {\mathfrak L} \to {\mathfrak L}$ satisfying
\begin{equation}\label{eqq1}
[f(x),f(y)] = f([x,y])
\end{equation}
for all $x,y\in {\mathfrak L}$ and such that $f(\B)=\B'$.

Let us denote by $(V,E)$ and $(V',E')$ the set of vertices and edges of $\Gamma({\mathfrak L},\B)$ and $\Gamma({\mathfrak L},\B')$, respectively. Taking into account that $V=\B$ and $V'=\B'$, and the fact $f(\B)=\B'$, we have that the map $f$ defines a bijection from $V$ to $V'$.

Given $x,y \in V$, we want to show that $(x,y) \in E$ if and only if $(f(x),f(y)) \in E'$.  Suppose that $(x,y)\in E$. Then there are $z\in V$ and $\lambda\in\IF^\times$ such that either $[x, z]=\lambda y$ or $[z,x]=\lambda y$. Applying $f$ to these relations we get that either $[f(x), f(z)]=\lambda f(y)$ or $[f(z),f(x)]=\lambda f(y)$. Thus, $(f(x),f(y)) \in E'$. The same argument using $f^{-1}$ shows that if $(f(x),f(y)) \in E'$ then $(x,y) \in E$, which concludes the proof that $\Gamma({\mathfrak L},\B)$ and $\Gamma({\mathfrak L},\B')$ are isomorphic via $f$.

\end{proof}

The following concept is borrowed from the theory of graded algebras (see for instance \cite{f4}).

\begin{definition}\rm
Let ${\mathfrak L}$ be a Leibniz algebra and let $$\Upsilon := {\mathfrak L} = \bigoplus_{i \in I} {\mathfrak L}_i \hspace{0.4cm} \mbox{\rm and} \hspace{0.4cm} \Upsilon' := {\mathfrak L} = \bigoplus_{j \in J} {\mathfrak L}'_j$$ be two decompositions of ${\mathfrak L}$ as an orthogonal direct sum of ideals. It is said that $\Upsilon$ and $\Upsilon'$ are {\em equivalent} if there exists an automorphism $f : {\mathfrak L} \to {\mathfrak L}$, and a bijection $\sigma: I \to J$ such that $f({\mathfrak L}_i)={\mathfrak L}'_{\sigma(i)}$ for all $i \in I$.
\end{definition}


\begin{theorem}
Let ${\mathfrak L}$ be a Leibniz algebra and consider two multiplicative bases  $\B:=\{v_i\}_{i \in I}$ and $\B':=\{v_j'\}_{j \in J}$. Consider the following assertions:
\begin{itemize}
\item[i.] The bases $\B$ and $\B'$ are equivalent.
\item[ii.] The graphs $\Gamma({\mathfrak L},\B)$ and $\Gamma({\mathfrak L},\B')$ are isomorphic.
\item[iii.] The decompositions $$\Upsilon := {\mathfrak L} = \bigoplus\limits_{[v_i] \in V/\sim} {\mathfrak L}_{C_{[v_i]}} \hspace{0.6cm} \text{and} \hspace{0.6cm} \Upsilon' :={\mathfrak L}= \bigoplus\limits_{[v'_j] \in V'/\sim} {\mathfrak L}_{C'_{[v'_j]}},$$  corresponding to $\B$ and $\B'$ are equivalent.
\end{itemize}
Then i. implies ii. and  iii.   
\end{theorem}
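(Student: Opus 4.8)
The plan is to prove both implications from a single automorphism $f$, namely the one supplied by the equivalence of the bases $\B$ and $\B'$. The implication i.\ $\Rightarrow$ ii.\ is already at hand: it is exactly the content of Lemma~\ref{isomorphism}, whose proof exhibits $f$ itself as a graph isomorphism $\Gamma({\mathfrak L},\B)\to\Gamma({\mathfrak L},\B')$. So the only genuine work lies in deducing iii.\ from i., and the governing observation is that this very same $f$ simultaneously realizes the equivalence of the two orthogonal decompositions.

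First I would fix the automorphism $f:{\mathfrak L}\to{\mathfrak L}$ with $f(\B)=\B'$ and recall from the proof of Lemma~\ref{isomorphism} that $f$ restricts to a bijection $V\to V'$ satisfying $(x,y)\in E$ if and only if $(f(x),f(y))\in E'$. From this edge-preservation in both directions I would deduce that $f$ sends undirected paths to undirected paths, and hence preserves the connectedness relation: for $v_i,v_j\in V$ one has $v_i\sim v_j$ in $\Gamma({\mathfrak L},\B)$ precisely when $f(v_i)\sim f(v_j)$ in $\Gamma({\mathfrak L},\B')$. Consequently $f$ carries each equivalence class $[v_i]\in V/\!\sim$ bijectively onto an equivalence class of $V'/\!\sim$, which lets me define a bijection $\sigma:V/\!\sim\,\to V'/\!\sim$ by $\sigma([v_i]):=[f(v_i)]$.

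Next I would verify that $f$ takes the ideal attached to a component onto the ideal attached to its image. Using the description of ${\mathfrak L}_{C_{[v_i]}}$ in \eqref{linsub} together with the linearity of $f$, I would compute $f({\mathfrak L}_{C_{[v_i]}})=f\bigl(\bigoplus_{v_j\in[v_i]}\IF v_j\bigr)=\bigoplus_{v_j\in[v_i]}\IF f(v_j)={\mathfrak L}_{C'_{\sigma([v_i])}}$, where the last equality holds because $\{f(v_j):v_j\in[v_i]\}$ is exactly the set of vertices forming the class $\sigma([v_i])=[f(v_i)]$. Combined with the bijection $\sigma$, this is precisely the definition of equivalence of the decompositions $\Upsilon$ and $\Upsilon'$, establishing iii.

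I do not anticipate a serious obstacle, since $f$ does all the heavy lifting in both implications; the one point requiring care is confirming that a graph isomorphism preserves the connected-component partition, so that $\sigma$ is well defined and bijective. This rests on the standard fact that isomorphisms preserve undirected paths, which follows immediately from the two-sided edge correspondence recorded in Lemma~\ref{isomorphism}. Everything else reduces to routine bookkeeping with the definitions of ${\mathfrak L}_{C_{[v_i]}}$ and of equivalence of decompositions.
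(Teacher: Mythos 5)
Your proposal is correct and follows essentially the same route as the paper: both cite Lemma~\ref{isomorphism} for i.\ $\Rightarrow$ ii.\ and then use the very same automorphism $f$ to observe that $f([v])=[f(v)]$ for every vertex $v$, whence $f\bigl({\mathfrak L}_{C_{[v]}}\bigr)={\mathfrak L}_{C'_{[f(v)]}}$ and the decompositions are equivalent. Your version merely makes explicit the intermediate step that a graph isomorphism preserves undirected paths and hence the connected-component partition, which the paper leaves implicit.
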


\begin{proof}
The implication from  i.\ to ii.\ was proved in Lemma \ref{isomorphism}.  Suppose that $f : {\mathfrak L} \to {\mathfrak L}$ is an automorphism satisfying $f(\B)=\B'$. By the implication from  i.\ to ii., we know that 
$\Gamma({\mathfrak L},\B)$ and $\Gamma({\mathfrak L},\B')$ are isomorphic via $f$ and thus $f([v])=[f(v)]$, for all $v\in V=\B$. It follows that $f\left({\mathfrak L}_{C_{[v]}}\right)={\mathfrak L}_{C_{[f(v)]}}$, for all $[v]\in V/\sim$, which proves that the decompositions of ${\mathfrak L}$ corresponding to $\B$ and $\B'$ are equivalent.

\end{proof}

\section{Minimality and Weak Symmetry}
Consider a graph $(V,E)$. Given two vertices $v_i,v_j \in V$, a {\em directed path} from $v_i$ to $v_j$ is a sequence of vertices $(v_{i_1},\dots,v_{i_n})$ with $v_{i_1} =v_i$, $v_{i_n} =v_j$ and  such that $(v_{i_{r}},v_{i_{r+1}})\in E$, for every $1\le r \le n-1$. We also recall that a graph $(V,E)$ is {\em symmetric} if $(v_i,v_j) \in E$ for every $(v_j,v_i)\in E$. Then, a weaker concept can be introduced as follows:

\begin{definition}\label{Def_weakly_symmetric}\rm
We say that the graph $(V,E)$ is {\em weakly symmetric} if for every $(v_j,v_i)\in E$ there exists a directed path from $v_i$ to $v_j$.
\end{definition}

\noindent In particular, we have that every symmetric graph is weakly symmetric.

\begin{example}
The following graphs are  weakly symmetric:
\begin{center}
\begin{tikzpicture}
	\SetGraphUnit{1.3}
	\SetVertexNormal[FillColor = gray!50,
					MinSize    = 12pt]
	\SetVertexNoLabel
	\Vertices{circle}{A,B,C,D,E,F}
	\tikzset{EdgeStyle/.style={->,> = latex'}}
	\Edges(A,B,C,D,E,F,A)
\end{tikzpicture}
\qquad
\begin{tikzpicture}
	\SetGraphUnit{1.3}
	\SetVertexNormal[FillColor = gray!50,
					MinSize    = 12pt]
	\SetVertexNoLabel
	\Vertices{circle}{A,B,C,D,E,F}
	\Vertex[x=0,y=0]{G}
	\tikzset{EdgeStyle/.style={->,> = latex'}}
	\Edges(A,B,C,G,F,E,D,G,A)
\end{tikzpicture}
\qquad
\begin{tikzpicture}
	\SetGraphUnit{1.3}
	\SetVertexNormal[FillColor = gray!50,
					MinSize    = 12pt]
	\SetVertexNoLabel
	\Vertices{circle}{A,B,C,D,E,F}
	\Vertex[x=0,y=0]{G}
	\tikzset{EdgeStyle/.style={->,> = latex'}}
	\Edges(A,B,G,C,D,G,E,F,G,A)
\end{tikzpicture}
\end{center}
\end{example}

\begin{definition}\label{def_strongly_connected}\rm
Given a  graph $(V,E)$, we say that two vertices $v_i,v_j\in V$ are {\em strongly connected} if there exists a directed path from $v_i$ to $v_j$ and one from $v_j$ to $v_i$. (As usual, we consider that there is a directed path using no edges from any vertex to itself.)
Additionally, we say that the graph $(V,E)$ is {\em strongly connected} if any two vertices of $V$ are strongly connected.
\end{definition}

\begin{remark}\label{Remark41} \rm
Note that a graph is strongly connected if and only if it is connected and weakly symmetric. Indeed, if any pair of vertices of a graph is strongly connected, then clearly the graph is connected and weakly symmetric. Conversely, suppose that $(V, E)$ is connected and weakly symmetric. Given vertices $v_i,v_j\in V$, let  $(v_i=v_{i_1},\dots,v_{i_n}=v_j)$ be an undirected path from $v_i$ to $v_j$. For each $1\le r \le n-1$, if $(v_{i_{r}},v_{i_{r+1}})\not\in E$, then we must have $(v_{i_{r+1}},v_{i_{r}})\in E$ and by weak symmetry there is a directed path $(v_{i_{r}}=v_{k_1},\dots,v_{k_m}=v_{i_{r+1}})$ from $v_{i_{r}}$ to $v_{i_{r+1}}$; thus we can replace in the given undirected path from $v_i$ to $v_j$ the edge $(v_{i_{r+1}},v_{i_{r}})$ with the directed path $(v_{i_{r}}=v_{k_1},\dots,v_{k_m}=v_{i_{r+1}})$. Proceeding in this fashion for all $1\le r \le n-1$, we end up with a directed path from $v_i$ to $v_j$, as required.
\end{remark}

In the case of Leibniz algebras, there can be no edges of type $(e_i,u_j)$, with $e_i \in \B_{\mathfrak I}$,  $u_j \in \B_{\mathfrak V}$. So the notion of weak symmetry is not well adjusted to the graph associated to a Leibniz algebra ${\mathfrak L}$. Due to this fact, we consider two subgraphs, one related to the ideal ${\mathfrak I}$ and the other to  ${\mathfrak V}$, isomorphic to the Lie algebra ${\mathfrak L}/{\mathfrak I}$.


\begin{definition}\label{def_subgraph_associated}\rm
Let ${\mathfrak L}$ be a Leibniz algebra with a multiplicative basis $\B$.  By a \emph{subgraph} of the  directed graph $\Gamma({\mathfrak L},\B) := (V,E)$ we mean a vertex-induced subgraph $(V',E')$, i.e.\ a 
pair $(V',E')$ with $V'\subset V$ and $E'\subset E$ such that the edge $(u,v)$ of $E$ belongs to $E'$ if and only if $u, v \in V'$.
\end{definition}

\noindent 
Let ${\mathfrak L}= {\mathfrak I} \oplus {\mathfrak V}$ be a Leibniz algebra with multiplicative basis ${\mathcal B}={\mathcal B}_{\mathfrak I} \; \dot{\cup} \; {\mathcal B}_{\mathfrak V}$, where $\B_{\mathfrak I} =\{e_i\}_{i \in I}$ and  $\B_{\mathfrak V}=\{u_j\}_{j \in J}$. In the following, we will consider the two subgraphs  $\Gamma({\mathfrak L},\B_{\mathfrak I})$ and  $\Gamma({\mathfrak L},\B_{\mathfrak V})$ of the associated graph $\Gamma({\mathfrak L},\B)$. These can be obtained from $\Gamma({\mathfrak L},\B)$ by removing the edges $(u_j,e_i)$ with $u_j \in \B_{\mathfrak V}$ and $e_i \in \B_{\mathfrak I}$. The set of vertices of the subgraph $\Gamma({\mathfrak L},\B_{\mathfrak I})$ is $\B_{\mathfrak I}$ while for the subgraph $\Gamma({\mathfrak L},\B_{\mathfrak V})$ this set is $\B_{\mathfrak V}$.

\begin{remark} \rm More precisely, 
taking into account Definition \ref{def_mult_Leib} and Remark \ref{Remark21} we observe that the subgraph $\Gamma({\mathfrak L},\B_{\mathfrak I}) $ is $ (V',E')$ where $V'=\B_{\mathfrak I}$ and $$E' := \{(e_i,e_k) \in V \times V : [e_i,u_j]\in \mathbb{F}^{\times}e_k, \hspace{0.1cm} \mbox{\rm for some } u_j \in {\mathfrak V}\}$$ and   the subgraph $\Gamma({\mathfrak L},\B_{\mathfrak V}) $ is $ (V'' ,E'')$ where $V''=\B_{\mathfrak V}$ and $$E'' := \{(u_j,u_l) \in V \times V : \{[u_j,u_k],[u_k,u_j]\} \cap \mathbb{F}^{\times}u_l \neq \emptyset \hspace{0.1cm} \mbox{\rm for some } u_k \in {\mathfrak V}\}.$$
\end{remark}

\begin{example}
For the complex Leibniz algebra ${\mathfrak L}$ of the Example \ref{Ejemplo_grafo} we get
\begin{center}
\begin{tikzpicture}[scale=0.8, transparency group=knockout]
\node (T) at (-1,-5) {$\Gamma({\mathfrak L},\B_{\mathfrak V})$};
\node (T) at (2,-5) {$\Gamma({\mathfrak L},\B_{\mathfrak I})$};
\begin{scope}[every node/.style={circle,thick,draw}]
\node[shape=circle,fill={rgb,255:gray,300; white,50},draw=black] (E) at (-1,0) {$e$};
\node[shape=circle,fill={rgb,255:gray,300; white,50},draw=black] (P) at (2,-1) {$p$};
\node[shape=circle,fill={rgb,255:gray,300; white,50},draw=black] (H) at (-1,-2) {$h$};
\node[shape=circle,fill={rgb,255:gray,300; white,50},draw=black] (F) at (-1,-4) {$f$};
\node[shape=circle,fill={rgb,255:gray,300; white,50},draw=black] (Q) at (2,-3) {$q$};

\draw[<->] (E) edge (H);
\draw[<->] (H) edge (F);
\draw[<->] (P) edge (Q);

\path (E) edge [loop left] (E);
\path (P) edge [loop right] (P);
\path (Q) edge [loop right] (Q);
\path (F) edge [loop left] (F);
\end{scope}
\end{tikzpicture}
\end{center}
\end{example}

In what follows, we denote by $\I(v)$ the ideal of a Leibniz algebra ${\mathfrak L}$ generated by $v\in {\mathfrak L}$.

\begin{lemma}\label{LEMMA 4.2}
Let ${\mathfrak L}$ be a Leibniz algebra with a multiplicative basis $\B:=\{v_i\}_{i \in I}$.
Given $v_i,v_j \in \B_K$, we have that $v_j \in \I(v_i)$ if and only if there exists a directed path from $v_i$ to $v_j$ in $\Gamma({\mathfrak L},\B_K)$, for whatever $K \in \{{\mathfrak I},{\mathfrak V}\}$.
\end{lemma}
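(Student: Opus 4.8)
The plan is to prove both implications of the biconditional, working separately but symmetrically in the two cases $K = {\mathfrak I}$ and $K = {\mathfrak V}$, and to reduce everything to understanding what a single edge of the subgraph $\Gamma({\mathfrak L},\B_K)$ means algebraically. First I would record the key observation that links edges to the generated ideal: by the description in the Remark, an edge $(e_i,e_k)$ in $\Gamma({\mathfrak L},\B_{\mathfrak I})$ means $[e_i,u_j] \in \mathbb{F}^\times e_k$ for some $u_j$, so that $e_k \in \I(e_i)$; and an edge $(u_j,u_l)$ in $\Gamma({\mathfrak L},\B_{\mathfrak V})$ means $\{[u_j,u_k],[u_k,u_j]\}\cap \mathbb{F}^\times u_l \neq \emptyset$ for some $u_k$, so that $u_l \in \I(u_j)$. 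Thus a single edge from a vertex $v$ to a vertex $w$ forces $w \in \I(v)$.

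For the backward implication (existence of a directed path $\Rightarrow$ membership in the ideal), I would argue by induction on the length $n$ of the directed path $(v_i = v_{i_1},\dots,v_{i_n} = v_j)$. The base case $n=1$ is trivial since $v_i = v_j \in \I(v_i)$. For the inductive step, the edge observation above gives $v_{i_2} \in \I(v_{i_1})$, and since $\I(v_{i_1})$ is an ideal containing $v_{i_2}$ we have $\I(v_{i_2}) \subseteq \I(v_{i_1})$; applying the induction hypothesis to the shorter path from $v_{i_2}$ to $v_j$ yields $v_j \in \I(v_{i_2}) \subseteq \I(v_{i_1}) = \I(v_i)$, as required. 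The only subtlety here is to confirm that each consecutive pair genuinely corresponds to an edge within the subgraph $\Gamma({\mathfrak L},\B_K)$ and hence that the witnessing product lands in the correct part ($\B_{\mathfrak I}$ or $\B_{\mathfrak V}$); this is exactly guaranteed by the defining condition of the subgraph.

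The forward implication ($v_j \in \I(v_i) \Rightarrow$ directed path) is where I expect the main obstacle, and it requires a careful description of $\I(v_i)$ in terms of basis vectors reachable by edges. The natural strategy is to show that the linear span $W$ of all $v_l \in \B_K$ reachable from $v_i$ by a directed path in $\Gamma({\mathfrak L},\B_K)$ is itself an ideal-like object containing $v_i$, from which minimality of $\I(v_i)$ forces $\I(v_i) \cap \B_K \subseteq \{v_l : \text{reachable}\}$, hence $v_j$ reachable. The delicate point is that $W$ need not be an ideal of all of ${\mathfrak L}$ on the nose (products can leave $\B_K$ or produce scalars times unreachable vertices), so I would instead argue directly: expand $v_j$, a basis element, using that $\I(v_i)$ is spanned over $\mathbb{F}$ by iterated products of $v_i$ with basis elements of $\B$, invoke the multiplicativity of $\B$ to see each such iterated product is a scalar multiple of a single basis vector, and use relation \eqref{equi} (namely $[{\mathfrak L},{\mathfrak I}]=0$) to control the two cases. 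Specifically, for $K={\mathfrak I}$, since $[{\mathfrak L},{\mathfrak I}]=0$ the ideal $\I(e_i)$ is generated by left-multiplications $[e_i, \cdot]$ alone, and each nonzero $[e_i,u_j]\in\mathbb{F}^\times e_k$ is precisely an edge; iterating shows every basis vector occurring in $\I(e_i)$ is reached by a directed path. For $K={\mathfrak V}$, one checks that the basis vectors of $\B_{\mathfrak V}$ appearing in $\I(u_i)$ arise only through products $[u_i,u_k]$ or $[u_k,u_i]$ landing in $\mathbb{F}^\times u_l$, which are exactly the edges of $\Gamma({\mathfrak L},\B_{\mathfrak V})$, and again iteration produces the path. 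The care needed is to ensure that when expanding $\I(v_i)$ one tracks only the contributions staying inside $\B_K$, discarding components that fall into the complementary part, and to verify no $v_j\in\B_K$ can appear in $\I(v_i)$ without a corresponding chain of edges.
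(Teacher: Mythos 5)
Your proposal is correct and follows essentially the same route as the paper: the backward direction is the same edge-by-edge induction along the path, and the forward direction rests on the same key fact, namely that $\I(v_i)$ is spanned by iterated products of $v_i$ with basis elements, each of which is a scalar multiple of a single basis vector by multiplicativity, so that a nonzero chain ending at $v_j$ reads off as a directed path that cannot leave $\B_K$ (since no edge goes from $\B_{\mathfrak I}$ to $\B_{\mathfrak V}$). The only detail the paper spells out that you only assert is the verification that the span of all such iterated products is indeed an ideal (hence equals $\I(v_i)$), which is worth writing down explicitly.
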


\begin{proof}
We consider the direct implication first. For $x, y\in{\mathfrak L}$, write $f(x,y)$ to denote either one of the elements $[x,y]$ or $[y,x]$. Then 
\begin{align}\label{Ivi}
\I(v_i)=\sum \IF f( f(\cdots f(f(v_i,w_1),w_2),\cdots),w_k),
\end{align}
where the sum is over all $k\geq 0$ (in case $k=0$, by convention, the corresponding subspace is $\IF v_i$), all $w_1,\ldots, w_k\in\B$ and all possible choices for $f(x,y)=[x,y]$ or $f(x,y)=[y,x]$. Indeed, the right-hand side of \eqref{Ivi} is a subspace of ${\mathfrak L}$ which is closed under left and right products with elements of ${\mathfrak L}$, thus it is an ideal. Since it contains $v_i$, it contains $\I(v_i)$ and in fact it must equal $\I(v_i)$ as every summand from the right-hand side of \eqref{Ivi} is contained in $\I(v_i)$. Observe also that, since $\B$ is multiplicative and $v_i, w_1,\ldots, w_k\in\B$, each term on the right-hand side of \eqref{Ivi} is either $0$ or a nonzero multiple of $v_l$, for some $v_l\in\B$. It follows that if $v_i,v_j\in\B$ and $v_j \in \I(v_i)$, then there exist $k\geq 0$, $w_1,\dots,w_k \in \B$ and $\lambda\in\IF^\times$ such that
\begin{align*}
f( f(\cdots f(f(v_i,w_1),w_2),\cdots),w_k)=\lambda v_j,
\end{align*} 
where above each occurrence of $f(x,y)$ has been assigned a specific interpretation as $[x,y]$ or $[y,x]$, which we fix. For each $1\leq l\leq k$, let $v_{i_l}$ be the unique element of $\B$ such that $f( f(\cdots f(f(v_i,w_1),w_2),\cdots),w_l)$ is a nonzero multiple of $v_{i_l}$. Then there is a directed path $(v_i,v_{i_1}, \ldots, v_{i_{k-1}}, v_j)$ from $v_i$ to $v_j$ in $\Gamma({\mathfrak L},\B)$. To finish, we just need to argue that this path is contained in the subgraph $\Gamma({\mathfrak L},\B_K)$, assuming that $v_i,v_j \in \B_K$. This is clear as $\B=\B_{\mathfrak I}\,\dot{\cup}\,\B_{\mathfrak V}$ and there are no edges in $\Gamma({\mathfrak L},\B)$ from a vertex in $\B_{\mathfrak I}$ to a vertex in $\B_{\mathfrak V}$.

For the converse implication, assume that $(v_i=v_{i_1}, \ldots, v_{i_{k}}=v_j)$ is a directed path from $v_i$ to $v_j$. Since $(v_{i_1},v_{i_2})$ is an edge, there exist $\lambda\in\IF^\times$ and $w_1\in\B$ such that $f(v_{i_1}, w_1)=\lambda v_{i_2}$, so $v_{i_2}=f(v_{i_1}, \lambda^{-1}w_1)\in \I(v_i)$. Proceeding successively as above along the path, we conclude that $v_{i_l}\in \I(v_i)$ for all $1\leq l\leq k$, so $v_j \in \I(v_i)$.

\end{proof}

\begin{definition}\label{def_base_debil_division}\rm
Let ${\mathfrak L} = {\mathfrak I} \oplus {\mathfrak V}$ be a Leibniz algebra with a multiplicative basis $\B={\mathcal B}_{\mathfrak I} \; \dot{\cup} \; {\mathcal B}_{\mathfrak V}$, where ${\mathcal B}_{\mathfrak I}=\{e_k\}_{k \in K}$ and ${\mathcal B}_{\mathfrak V} = \{u_j\}_{j \in J}$. It is said that $\B$ is of {\it weak division} if
\begin{itemize}
\item $0 \neq [e_i,u_j] = \lambda e_k$ implies $e_i \in \mathcal{I}(e_k)$.
\item $0 \neq [u_i,u_k] = \lambda u_p$ implies $u_i,u_k \in \mathcal{I}(u_p)$.
\end{itemize}
\end{definition}

\begin{proposition}\label{Prop 4.1}
Let ${\mathfrak L}$ be a Leibniz algebra admitting a multiplicative basis $\B={\mathcal B}_{\mathfrak I} \; \dot{\cup} \; {\mathcal B}_{\mathfrak V}$, where ${\mathcal B}_{\mathfrak I}=\{e_k\}_{k \in K}$ and ${\mathcal B}_{\mathfrak V} = \{u_j\}_{j \in J}.$ Under these conditions, $\B$ is of weak division if and only if the subgraph $\Gamma({\mathfrak L},\B_K)$ is weakly symmetric, for $K \in \{{\mathfrak I},{\mathfrak V}\}$.
\end{proposition}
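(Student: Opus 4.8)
The plan is to prove the stated equivalence separately for $K=\mathfrak{I}$ and for $K=\mathfrak{V}$, since the definition of weak division has exactly one clause governing each of the two subgraphs and these clauses are independent. In both cases the bridge between the algebraic side (membership in an ideal $\I(\cdot)$) and the combinatorial side (existence of a directed path) is supplied by Lemma~\ref{LEMMA 4.2}. Consequently the whole argument reduces to matching, edge by edge, the defining relations of each subgraph against the corresponding clause of weak division, and then translating ``$\in\I(\cdot)$'' into ``directed path exists'' via that lemma.

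For $K=\mathfrak{I}$ I would use the description of $\Gamma({\mathfrak L},\B_{\mathfrak I})=(V',E')$ from the Remark: an edge $(e_i,e_k)\in E'$ is precisely a relation $0\neq[e_i,u_j]=\lambda e_k$ for some $u_j\in\B_{\mathfrak V}$ and $\lambda\in\IF^\times$. Weak symmetry of $\Gamma({\mathfrak L},\B_{\mathfrak I})$ asserts that for each such edge there is a directed path from $e_k$ to $e_i$, which by Lemma~\ref{LEMMA 4.2} (taking $v_i=e_k$, $v_j=e_i$) is equivalent to $e_i\in\I(e_k)$. This is word for word the first clause of weak division, so for this subgraph the two properties literally coincide.

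For $K=\mathfrak{V}$ the correspondence is slightly looser and requires the main bookkeeping. The key observation is that a single nonzero product $[u_i,u_k]=\lambda u_p$ produces \emph{two} edges of $\Gamma({\mathfrak L},\B_{\mathfrak V})=(V'',E'')$: namely $(u_i,u_p)$, arising from $[u_i,u_k]\in\IF^\times u_p$, and $(u_k,u_p)$, arising because $[u_i,u_k]$ lies in the intersection $\{[u_k,u_i],[u_i,u_k]\}\cap\IF^\times u_p$. Conversely, every edge $(u_j,u_l)\in E''$ comes from a relation $[u_j,u_k]=\lambda u_l$ or $[u_k,u_j]=\lambda u_l$ for some $u_k$. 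Granting weak division, I would take an arbitrary edge $(u_j,u_l)$ and apply the relevant clause in whichever orientation occurs, obtaining $u_j\in\I(u_l)$; Lemma~\ref{LEMMA 4.2} then yields a path from $u_l$ to $u_j$, so the subgraph is weakly symmetric. For the reverse, from $0\neq[u_i,u_k]=\lambda u_p$ I would feed the two induced edges $(u_i,u_p)$ and $(u_k,u_p)$ into weak symmetry, producing paths back to $u_i$ and $u_k$, hence $u_i,u_k\in\I(u_p)$ by Lemma~\ref{LEMMA 4.2}, which is exactly the second clause.

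The step I expect to require the most care is precisely this $\mathfrak{V}$-case bookkeeping: because the bracket is not antisymmetric and $E''$ is defined through the union $\{[u_j,u_k],[u_k,u_j]\}$, one must verify that each product contributes exactly the two edges needed to recover both $u_i\in\I(u_p)$ and $u_k\in\I(u_p)$, and dually that an arbitrary edge never demands more than the single membership that weak division supplies in its given orientation. Once this edge-to-relation dictionary is set up cleanly, both implications follow immediately from Lemma~\ref{LEMMA 4.2}, and the $\mathfrak{I}$-case is the trivial instance of the same scheme.
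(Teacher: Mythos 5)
Your proposal is correct and follows essentially the same route as the paper's proof: both directions translate edges of $\Gamma({\mathfrak L},\B_{\mathfrak I})$ and $\Gamma({\mathfrak L},\B_{\mathfrak V})$ into the defining relations of weak division and back via Lemma~\ref{LEMMA 4.2}, including the observation that a single product $0\neq[u_i,u_k]=\lambda u_p$ yields both edges $(u_i,u_p)$ and $(u_k,u_p)$. The extra care you flag for the $\mathfrak{V}$-case bookkeeping is exactly the point the paper handles, so there is nothing to add.
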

\begin{proof}
Let us suppose that the multiplicative basis $\B$ is of   weak division. First we prove that $\Gamma({\mathfrak L},\B_{\mathfrak V})$ is weakly symmetric. Given an edge $(u_i,u_j)$ of $\Gamma({\mathfrak L},\B_{\mathfrak V})$ we have that there exist $u_k \in \B_{\mathfrak V}$ and $\lambda_j \in \mathbb{F}^{\times}$ such that $[u_i,u_k]=\lambda_j u_j$ or $[u_k,u_i]=\lambda_ju_j$. Since $\B$ is of weak division we get $u_i \in \mathcal{I}(u_j)$, and by Lemma \ref{LEMMA 4.2} there exists a directed path from $u_j$ to $u_i$, so
$\Gamma({\mathfrak L},\B_{\mathfrak V})$ is weakly symmetric.

Now we prove that $\Gamma({\mathfrak L},\B_{\mathfrak I})$ is weakly symmetric. Let us take an edge $(e_i,e_j)$ of $\Gamma({\mathfrak L},\B_{\mathfrak I})$. So there exists $u_k \in \B_{\mathfrak V}$ such that $[e_i,u_k]=\lambda_je_j$, for some $\lambda_j \in \mathbb{F}^{\times}$. Since $\B$ is of weak division we get $e_i \in \mathcal{I}(e_j)$. As in the previous case, by Lemma \ref{LEMMA 4.2} we conclude $\Gamma({\mathfrak L},\B_{\mathfrak I})$ is weakly symmetric.

Conversely, let us suppose that the subgraph $\Gamma({\mathfrak L},\B_K)$ is weakly symmetric, for $K \in \{{\mathfrak I},{\mathfrak V}\}$. We want to show that $\B$ is of weak division.
Given $e_i,e_k \in \B_{\mathfrak I}, u_j \in \B_{\mathfrak V}$ such that $0 \neq [e_i,u_j] = \lambda e_k$. Then $(e_i,e_k)$ is an edge of $\Gamma({\mathfrak L},\B_{\mathfrak I})$. Since $\Gamma({\mathfrak L},\B_{\mathfrak I})$ is weakly symmetric, there exists a directed path from $e_k$ to $e_i$. Then, by Lemma \ref{LEMMA 4.2} it follows that $e_i \in \I(e_k)$. In a similar way, for $u_i, u_k, u_p \in \B_{\mathfrak V}$ such that $0 \neq [u_i,u_k] = \lambda u_p$, we get that $(u_i,u_p)$ and $(u_k,u_p)$ are edges of $\Gamma({\mathfrak L},\B_{\mathfrak V})$. Analogously to the previous case, Lemma \ref{LEMMA 4.2} lets us assert that $u_i, u_k \in \I(u_p)$. In conclusion, $\B$ is of weak division, as required.
\end{proof}


\begin{proposition}\label{Proposicion 6}
Let ${\mathfrak L}$ be a Leibniz algebra admitting a multiplicative basis $\B$. If $\Gamma({\mathfrak L},\B_{\mathfrak I})$ is strongly connected, then any ideal $I$ of ${\mathfrak L}$ such that $I\cap\B_{\mathfrak I}\neq\emptyset$ verifies ${\mathfrak I}\subset I$.
\end{proposition}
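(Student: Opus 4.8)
The plan is to exploit the dictionary between directed paths in the subgraph $\Gamma({\mathfrak L},\B_{\mathfrak I})$ and membership in principal ideals that is provided by Lemma \ref{LEMMA 4.2}. The natural starting point is the hypothesis $I\cap\B_{\mathfrak I}\neq\emptyset$, which furnishes a single basis vector $e_{i_0}\in I\cap\B_{\mathfrak I}$ to anchor the argument. Since $I$ is an ideal and $e_{i_0}\in I$, the whole ideal generated by this element is contained in $I$, that is $\I(e_{i_0})\subset I$.

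Next I would convert the strong connectivity of $\Gamma({\mathfrak L},\B_{\mathfrak I})$ into a generation statement for all of $\B_{\mathfrak I}$. By Definition \ref{def_strongly_connected}, for every $e_k\in\B_{\mathfrak I}$ there exists a directed path in $\Gamma({\mathfrak L},\B_{\mathfrak I})$ from $e_{i_0}$ to $e_k$. Applying Lemma \ref{LEMMA 4.2} with $K={\mathfrak I}$, such a path yields $e_k\in\I(e_{i_0})$, and hence $e_k\in I$ by the previous step. Letting $k$ range over the full index set $K$ gives the inclusion $\B_{\mathfrak I}\subset I$.

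Finally, since $\B_{\mathfrak I}$ is by construction a basis of ${\mathfrak I}$, the inclusion $\B_{\mathfrak I}\subset I$ immediately upgrades to ${\mathfrak I}=\spans(\B_{\mathfrak I})\subset I$, which is exactly the assertion of the proposition.

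I do not expect a genuine obstacle: the whole argument reduces to one application of Lemma \ref{LEMMA 4.2} once the right basis element has been extracted from $I$. The only point requiring care is the \emph{direction} of connectivity one invokes---namely a directed path \emph{from} the distinguished vertex $e_{i_0}$ already known to lie in $I$ \emph{to} an arbitrary basis vector $e_k$, so that Lemma \ref{LEMMA 4.2} places $e_k$ inside $\I(e_{i_0})$. This is precisely what strong connectivity supplies; note that mere weak symmetry or connectivity of the subgraph would not guarantee a directed path reaching every $e_k$ starting from the particular vertex $e_{i_0}$, so the full strength of the strongly connected hypothesis is what makes the conclusion go through.
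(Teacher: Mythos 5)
Your proposal is correct and follows essentially the same route as the paper: pick $e_{i_0}\in I\cap\B_{\mathfrak I}$, use strong connectivity to obtain a directed path from $e_{i_0}$ to each $e_k\in\B_{\mathfrak I}$, and apply Lemma~\ref{LEMMA 4.2} to get $e_k\in\I(e_{i_0})\subset I$, whence ${\mathfrak I}\subset I$. Your closing remark about the direction of the path (from the known element of $I$ to the arbitrary vertex) correctly identifies the point where the hypothesis is used.
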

\begin{proof}
Let $e_{i_0} \in \B_{\mathfrak I}\cap I$. 
By hypothesis, there exists a directed path from $e_{i_0}$ to $e_i$ in $\Gamma({\mathfrak L},\B_{\mathfrak I})$, for all $e_i\in\B_{\mathfrak I}$. Applying Lemma \ref{LEMMA 4.2} we have $e_i \in {\mathcal I}(e_{i_0}) \subset I$, therefore
${\mathfrak I} \subset I.$
\end{proof}

\begin{proposition}\label{Proposicion 5}
Let ${\mathfrak L}$ be a Leibniz algebra admitting a multiplicative basis $\B$. If the subgraph $\Gamma({\mathfrak L},\B_{\mathfrak V})$ 
is strongly connected, then any ideal $I$ of ${\mathfrak L}$ such that $I \cap \B_{\mathfrak V}\neq \emptyset$ verifies $I={\mathfrak L}$.
\end{proposition}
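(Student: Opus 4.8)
The plan is to mimic the proof of Proposition \ref{Proposicion 6}, but with the extra difficulty that strong connectivity of $\Gamma({\mathfrak L},\B_{\mathfrak V})$ gives control only over the vertices in $\B_{\mathfrak V}$, whereas to conclude $I={\mathfrak L}$ we must reach every basis vector, including those in $\B_{\mathfrak I}$. First I would fix $u_{j_0}\in I\cap\B_{\mathfrak V}$ and use that $\Gamma({\mathfrak L},\B_{\mathfrak V})$ is strongly connected to produce, for each $u_j\in\B_{\mathfrak V}$, a directed path from $u_{j_0}$ to $u_j$. By Lemma \ref{LEMMA 4.2} this yields $u_j\in\I(u_{j_0})\subseteq I$, and since $\B_{\mathfrak V}$ is a basis of ${\mathfrak V}$ we conclude ${\mathfrak V}\subseteq I$.

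The main obstacle is then to upgrade ${\mathfrak V}\subseteq I$ to ${\mathfrak I}\subseteq I$, which cannot come from the graph alone and must instead use the defining property of ${\mathfrak I}$ together with relation \eqref{equi}. Since $I$ is an ideal containing ${\mathfrak V}$, it contains both $[{\mathfrak I},{\mathfrak V}]$ and $[{\mathfrak V},{\mathfrak V}]$. I would then show that every generator $[x,x]$ of ${\mathfrak I}$ already lies in $[{\mathfrak I},{\mathfrak V}]+[{\mathfrak V},{\mathfrak V}]$: writing $x=a+b$ with $a\in{\mathfrak I}$ and $b\in{\mathfrak V}$ and expanding $[x,x]=[a,a]+[a,b]+[b,a]+[b,b]$, the relation $[{\mathfrak L},{\mathfrak I}]=0$ annihilates the two terms $[a,a]$ and $[b,a]$ whose right argument lies in ${\mathfrak I}$, leaving $[x,x]=[a,b]+[b,b]\in[{\mathfrak I},{\mathfrak V}]+[{\mathfrak V},{\mathfrak V}]\subseteq I$.

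Because $I$ is an ideal containing the whole generating set $\{[x,x]:x\in{\mathfrak L}\}$ of ${\mathfrak I}$, it must contain the ideal they generate, so ${\mathfrak I}\subseteq I$. Combining this with ${\mathfrak V}\subseteq I$ gives ${\mathfrak L}={\mathfrak I}\oplus{\mathfrak V}\subseteq I$, hence $I={\mathfrak L}$, as desired. The only place where care is needed is the vanishing of $[a,a]$ and $[b,a]$, which is exactly where \eqref{equi} enters; the remaining steps are bookkeeping parallel to Proposition \ref{Proposicion 6}.
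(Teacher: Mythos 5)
Your proposal is correct and follows essentially the same route as the paper: strong connectivity plus Lemma \ref{LEMMA 4.2} gives ${\mathfrak V}\subseteq I$, and then the inclusion ${\mathfrak I}\subset[{\mathfrak I},{\mathfrak V}]+[{\mathfrak V},{\mathfrak V}]$ forces ${\mathfrak I}\subseteq I$. The only difference is that the paper asserts this last inclusion without proof, whereas you supply the (correct) justification via the decomposition $x=a+b$ and relation \eqref{equi}.
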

\begin{proof}
We will prove that ${\mathfrak L} = {\mathfrak I} \oplus {\mathfrak V} \subset I$. Let $u_{j_0}\in \B_{\mathfrak V}\cap I$.
By hypothesis, there exists a directed path from $u_{j_0}$ to $u_j$ in $\Gamma({\mathfrak L},\B_{\mathfrak V})$, for all $u_j \in \B_{\mathfrak V}$. Therefore, by Lemma~\ref{LEMMA 4.2} we have $u_j \in {\mathcal I}(u_{j_0}) \subset I$, so consequently
\begin{equation}\label{EQQ1}
{\mathfrak V} \subset I.
\end{equation}
Finally, since ${\mathfrak I} \subset [{\mathfrak I},{\mathfrak V}]+[{\mathfrak V},{\mathfrak V}]$, by Equation \eqref{EQQ1} we get ${\mathfrak I} \subset I$, which proves $I={\mathfrak L}$, as desired.
\end{proof}

%
%
%
%
%

\begin{definition}\rm
A Leibniz algebra ${\mathfrak L}$ admitting a multiplicative basis $\B$ is {\em minimal} if its only non-zero ideals admitting a multiplicative basis contained in $\B$ are ${\mathfrak I}$ and ${\mathfrak L}$.
\end{definition}

\begin{theorem}\label{theorem}
Let ${\mathfrak L}$ be a Leibniz algebra with multiplicative basis $\B={\mathcal B}_{\mathfrak I} \; \dot{\cup} \; {\mathcal B}_{\mathfrak V}$. Then the following statements are equivalent:
\begin{itemize}
\item[i.] ${\mathfrak L}$ is minimal.
\item[ii.] $\Gamma({\mathfrak L},\B_K)$ is strongly connected, for $K \in \{{\mathfrak I},{\mathfrak V}\}$.
\item[iii.] $\B$ is of weak division and $\Gamma({\mathfrak L},\B_K)$ is connected, for $K \in \{{\mathfrak I},{\mathfrak V}\}$.
\end{itemize}
\end{theorem}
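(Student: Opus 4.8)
The plan is to prove the three-way equivalence by establishing the cycle of implications \emph{i.}\ $\Rightarrow$ \emph{ii.}\ $\Rightarrow$ \emph{iii.}\ $\Rightarrow$ \emph{i.}, leaning heavily on the machinery already developed in Section~4. The key tools are Lemma~\ref{LEMMA 4.2} (which translates ``directed path'' into ``membership in the generated ideal''), Proposition~\ref{Prop 4.1} (weak division $\Leftrightarrow$ weak symmetry of both subgraphs), Propositions~\ref{Proposicion 6} and~\ref{Proposicion 5} (strong connectedness of $\Gamma({\mathfrak L},\B_{\mathfrak I})$ or $\Gamma({\mathfrak L},\B_{\mathfrak V})$ forces any ideal meeting the corresponding vertex set to contain ${\mathfrak I}$ or equal ${\mathfrak L}$), and Remark~\ref{Remark41} (strongly connected $\Leftrightarrow$ connected and weakly symmetric). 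The equivalence \emph{ii.}\ $\Leftrightarrow$ \emph{iii.}\ is in fact immediate from Remark~\ref{Remark41} together with Proposition~\ref{Prop 4.1}: ``$\Gamma({\mathfrak L},\B_K)$ strongly connected for both $K$'' is the same as ``$\Gamma({\mathfrak L},\B_K)$ connected and weakly symmetric for both $K$'', and weak symmetry of both subgraphs is exactly weak division by Proposition~\ref{Prop 4.1}. So the real content is the pair of implications relating minimality (\emph{i.}) to the graph-theoretic condition (\emph{ii.}).

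For \emph{ii.}\ $\Rightarrow$ \emph{i.}, I would take a non-zero ideal $I$ admitting a multiplicative basis contained in $\B$, so $I$ is spanned by a subset of $\B$ and in particular $I\cap\B\neq\emptyset$. The basis $\B=\B_{\mathfrak I}\,\dot\cup\,\B_{\mathfrak V}$ splits, so either $I\cap\B_{\mathfrak I}\neq\emptyset$ or $I\cap\B_{\mathfrak V}\neq\emptyset$. If $I\cap\B_{\mathfrak V}\neq\emptyset$, then since $\Gamma({\mathfrak L},\B_{\mathfrak V})$ is strongly connected, Proposition~\ref{Proposicion 5} gives $I={\mathfrak L}$. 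Otherwise $I\cap\B\subset\B_{\mathfrak I}$, so $I\subset{\mathfrak I}$; since $I\cap\B_{\mathfrak I}\neq\emptyset$ and $\Gamma({\mathfrak L},\B_{\mathfrak I})$ is strongly connected, Proposition~\ref{Proposicion 6} gives ${\mathfrak I}\subset I$, whence $I={\mathfrak I}$. Thus the only such ideals are ${\mathfrak I}$ and ${\mathfrak L}$, which is minimality.

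The direction \emph{i.}\ $\Rightarrow$ \emph{ii.}\ is the step I expect to be the main obstacle, because it requires manufacturing ideals with multiplicative bases from connected components of the subgraphs and then using minimality to force connectivity and weak symmetry. The idea is the contrapositive: if $\Gamma({\mathfrak L},\B_K)$ fails to be strongly connected for some $K$, I produce a non-zero ideal with a multiplicative basis in $\B$ that is neither ${\mathfrak I}$ nor ${\mathfrak L}$. First, if $\Gamma({\mathfrak L},\B_{\mathfrak I})$ is disconnected, one connected component $\C$ yields the subspace ${\mathfrak L}_{\C}=\bigoplus_{e_j\in\C}\IF e_j$; arguing as in Theorem~\ref{decompositions} this is an ideal (using that edges out of $\B_{\mathfrak I}$ stay inside $\B_{\mathfrak I}$ by \eqref{equi}), it has a multiplicative basis in $\B$, and it is a proper non-zero ideal strictly between $\{0\}$ and ${\mathfrak I}$, contradicting minimality. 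The more delicate situation is a subgraph that is connected but \emph{not} weakly symmetric, meaning strong connectedness fails through lack of a return path rather than disconnection; here I would use Lemma~\ref{LEMMA 4.2} to take the ideal $\I(v)$ generated by a suitable vertex $v$ whose reachable set (its ``forward closure'' along directed paths) is a proper subset of the component. By Lemma~\ref{LEMMA 4.2} this ideal is spanned by exactly the vertices reachable from $v$, hence has a multiplicative basis in $\B$; because weak symmetry fails, this reachable set is a proper subset, so $\I(v)$ is a proper non-zero ideal different from ${\mathfrak I}$ and ${\mathfrak L}$, again contradicting minimality. Care is needed here to verify that the forward-reachable set really is closed under the algebra multiplication (so that it genuinely forms an ideal) and to handle the ${\mathfrak V}$-case, where by Proposition~\ref{Proposicion 5}'s reasoning generating from a $\B_{\mathfrak V}$-vertex can drag in part of ${\mathfrak I}$; I would check that the resulting ideal still admits a basis contained in $\B$ and remains proper.
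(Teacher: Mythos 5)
Your proposal is correct in substance and uses exactly the paper's machinery; the equivalence \emph{ii.}$\Leftrightarrow$\emph{iii.} via Proposition~\ref{Prop 4.1} and Remark~\ref{Remark41}, and the implication \emph{ii.}$\Rightarrow$\emph{i.} via the dichotomy $I\subset{\mathfrak I}$ versus $I\not\subset{\mathfrak I}$ together with Propositions~\ref{Proposicion 6} and~\ref{Proposicion 5}, coincide with the paper's proof. Where you diverge is \emph{i.}$\Rightarrow$\emph{ii.}: you argue by contrapositive, splitting into ``disconnected'' and ``connected but not weakly symmetric'' and manufacturing a forbidden ideal in each case, and you flag several verifications as open. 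The paper argues directly and this dissolves every difficulty you raise: given $e_i,e_j\in\B_{\mathfrak I}$, the ideal $\I(e_i)$ is non-zero, is contained in ${\mathfrak I}$, and (by the displayed formula \eqref{Ivi} in the proof of Lemma~\ref{LEMMA 4.2}) is spanned by a subset of $\B$, hence admits a multiplicative basis contained in $\B$; minimality then forces $\I(e_i)={\mathfrak I}$, so $e_j\in\I(e_i)$ and Lemma~\ref{LEMMA 4.2} produces the directed path from $e_i$ to $e_j$ in $\Gamma({\mathfrak L},\B_{\mathfrak I})$. Likewise $\I(u_i)={\mathfrak L}$ for $u_i\in\B_{\mathfrak V}$ (it is not contained in ${\mathfrak I}$, so it cannot equal ${\mathfrak I}$), giving the paths in $\Gamma({\mathfrak L},\B_{\mathfrak V})$. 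In particular there is no need to check that a ``forward-reachable set'' is closed under multiplication --- one works with $\I(v)$, which is an ideal by definition, and Lemma~\ref{LEMMA 4.2} identifies its spanning vertices --- and the fact that $\I(u_i)$ absorbs part of ${\mathfrak I}$ is harmless. Your case split is also not quite uniform (the connected-component construction you propose for the disconnected case does not produce an ideal when applied to $\B_{\mathfrak V}$, since products of $\B_{\mathfrak V}$-vertices may land in ${\mathfrak I}$), but your second branch, the $\I(v)$ argument, already covers all failures of strong connectedness, so the detour is merely redundant rather than wrong.
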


\begin{proof}
The equivalence between ii.\ and iii.\ follows by Proposition \ref{Prop 4.1} and Remark~\ref{Remark41}.
Now, we prove that ii.\ implies i. Let $I$ be a non-zero ideal of ${\mathfrak L}$ admitting a multiplicative basis contained in $\B$. We have two possibilities, either $I \subset {\mathfrak I}$  or $I \not\subset {\mathfrak I}$. In the first case by Proposition \ref{Proposicion 6} we infer that $I={\mathfrak I}$, and in the second case by Proposition \ref{Proposicion 5} we have $I={\mathfrak L}$; consequently ${\mathfrak L}$ is minimal.

It remains to show that  i. implies ii. Suppose that ${\mathfrak L}$ is minimal. First we take care of the case $K ={\mathfrak I}$. Let $e_i, e_j\in\B_{\mathfrak I}$. The minimality of ${\mathfrak L}$ implies that ${\I}(e_i)={\mathfrak I}$, so in particular $e_j\in\I(e_i)$. Then, by Lemma \ref{LEMMA 4.2}, there exists a directed path from $e_i$ to $e_j$ in $\Gamma({\mathfrak L},\B_{\mathfrak I})$, proving that the latter is strongly connected.
Similarly, given $v_i,v_j\in\B_{\mathfrak V}$, the minimality of ${\mathfrak L}$ implies that ${\I}(v_i)={\mathfrak L}$, so $v_j\in\I(v_i)$. Then, by Lemma \ref{LEMMA 4.2}, there exists a directed path from $v_i$ to $v_j$ in $\Gamma({\mathfrak L},\B_{\mathfrak V})$. This completes the proof.

\end{proof}

\begin{example}
For the $n$-dimensional Leibniz algebra ${\mathfrak L}$ from Example \ref{EJEMM_24} with basis $\mathcal{B} := \{e,f,h,x_0,x_1,\dots, x_{n-4}\}$ the subgraphs $\Gamma({\mathfrak L},\mathcal{B}_{{\mathfrak I}})$ and $\Gamma({\mathfrak L},\mathcal{B}_{{\mathfrak V}})$ are:

\begin{center}
\begin{tikzpicture}[scale=0.8, transparency group=knockout]
\node (T) at (7,2) {$\Gamma({\mathfrak L},\B_{\mathfrak V})$};
\node (T) at (7,0) {$\Gamma({\mathfrak L},\B_{\mathfrak I})$};
\begin{scope}[every node/.style={circle,thick,draw}]
\node[shape=circle,fill={rgb,255:gray,300; white,50},draw=black] (E) at (-1,2) {$e$};
\node[shape=circle,fill={rgb,255:gray,300; white,50},draw=black] (H) at (1,2) {$h$};
\node[shape=circle,fill={rgb,255:gray,300; white,50},draw=black] (F) at (3,2) {$f$};
\node[shape=circle,fill={rgb,255:gray,300; white,50},draw=black] (X0) at (-3,0) {$x_0$};
\node[shape=circle,fill={rgb,255:gray,300; white,50},draw=black] (X1) at (-1,0) {$x_1$};
\node[shape=circle,fill={rgb,255:gray,300; white,50},draw=black] (XN-5) at (3,0) {$x_{n-5}$};
\node[shape=circle,fill={rgb,255:gray,300; white,50},draw=black] (XN-4) at (5,0) {$x_{n-4}$};

\draw[<->] (E) edge (H);
\draw[<->] (H) edge (F);
\draw[<->] (X0) edge (X1);
\draw[<->] (X1) edge (0.5,0);
\draw[<->] (XN-5) edge (1.5,0);
\draw[<->] (XN-5) edge (XN-4);


\node[fill,opacity=0,text opacity=1] at (1,0) {\ldots};

\path (E) edge [loop left] (E);
\path (F) edge [loop right] (F);
\path (X0) edge [loop below] (X0);
\path (X1) edge [loop below] (X1);
\path (XN-5) edge [loop below] (XN-5);
\path (XN-4) edge [loop below] (XN-4);
\end{scope}
\end{tikzpicture}
\end{center}

\noindent Since the subgraphs $\Gamma({\mathfrak L},\B_{\mathfrak I})$ and $\Gamma({\mathfrak L},\B_{\mathfrak V})$ are strongly connected, Theorem \ref{theorem} shows that $ {\mathfrak L} $ is minimal.
\end{example}

\bigskip

{\bf Acknowledgment.} The authors would like to thank the referee for the detailed reading of this work and for the suggestions
which have improved the final version of the paper.

\bigskip

\end{document}